\let\OLDthebibliography\thebibliography
\renewcommand\thebibliography[1]{
  \OLDthebibliography{#1}
  \setlength{\parskip}{0pt}
  \setlength{\itemsep}{0pt plus 0.3ex}
}
\theoremstyle{definition}
\newcommand{\squeezemat}[1]{\addtolength{\arraycolsep}{#1}}
\newcommand{\FmL}{\mathcal{F}_{m,L}}
\newcommand{\f}{\bm{f}}
\renewcommand{\u}{\bm{u}}
\newcommand{\x}{\bm{x}}
\newcommand{\y}{\bm{y}}
\newcommand{\e}{\bm{e}}
\newcommand{\A}{\bm{A}}
\newcommand{\B}{\bm{B}}
\renewcommand{\C}{\bm{C}}
\renewcommand{\D}{\bm{D}}
\renewcommand{\H}{\bm{H}}
\newcommand{\X}{\bm{X}}
\newcommand{\Y}{\bm{Y}}
\newcommand{\U}{\bm{U}}
\DeclareMathOperator{\rate}{\textsc{rate}}
\DeclareMathOperator{\sensitivity}{\textsc{sens}}
\title{A Tutorial on a Lyapunov-Based Approach to the\\Analysis of Iterative Optimization Algorithms}
\author{Bryan Van Scoy\thanks{B. Van Scoy is with the Department of Electrical and Computer Engineering at Miami University, Oxford, OH 45056, USA
\texttt{bvanscoy@miamioh.edu}}\and Laurent Lessard\thanks{L. Lessard is with the Department of Mechanical and Industrial Engineering at Northeastern University, Boston, MA 02115, USA \texttt{l.lessard@northeastern.edu\vspace{2pt}}}}
\begin{document}
\maketitle

\begin{abstract}
Iterative gradient-based optimization algorithms are widely used to solve difficult or large-scale optimization problems. There are many algorithms to choose from, such as gradient descent and its accelerated variants such as Polyak's Heavy Ball method or Nesterov's Fast Gradient method. It has long been observed that iterative algorithms can be viewed as dynamical systems, and more recently, as robust controllers. Here, the ``uncertainty'' in the dynamics is the gradient of the function being optimized.
Therefore, worst-case or average-case performance can be analyzed using tools from robust control theory, such as integral quadratic constraints (IQCs). In this tutorial paper, we show how such an analysis can be carried out using an alternative Lyapunov-based approach. This approach recovers the same performance bounds as with IQCs, but with the added benefit of constructing a Lyapunov function.

\end{abstract}

{\let\thefootnote\relax\footnote{This material is based upon work supported by the National Science Foundation under Grants No. 2136945, 2139482.}}%

\section{Introduction}

In this paper, we consider unconstrained optimization problems of the form
$
    \min_{x\in \R^d}\, f(x),
$
where $f:\R^d\to\R$ is a continuously differentiable function. 

Iterative gradient-based optimization algorithms attempt to solve such problems by starting from some initial guess $x_0\in\R^d$ and iteratively updating $x_k$ in an effort to converge to a local minimizer $x_\star \in \argmin_x\, f(x)$. The simplest such algorithm is \emph{gradient descent} \eqref{GD}, which uses the update rule
\begin{equation}\label{GD}
    x_{k+1} = x_k - \alpha \grad f(x_k). \tag{GD}
\end{equation}
Intuitively, each step moves in the direction of the negative gradient (steepest descent direction). Here, $\alpha$ is the \emph{stepsize}, which is a tunable parameter. Generally, a larger stepsize can result in faster convergence, but if the stepsize is too large, the algorithm may fail to converge.

Gradient descent may converge slowly when the function is poorly conditioned (when the condition number of the Hessian $\nabla^2 f$ is large). This is due to the fact that the contours of $f$ are elongated so the iterates tend to oscillate in a non-productive manner. One way to alleviate this problem is by using \emph{accelerated algorithms}. For example, Polyak's Heavy Ball \eqref{HB} \cite[\S3.2.1]{polyak_book} uses an additional \emph{momentum term} compared to \eqref{GD}:
\begin{equation}\label{HB}
    x_{k+1} = x_k -\alpha \grad f(x_k) + \beta (x_k - x_{k-1}). \tag{HB}
\end{equation}
Alternatively, Nesterov's Fast Gradient \eqref{FG} \cite[\S2.2.1]{nesterov_book} evaluates the gradient at an interpolated point $y_k$:
\begin{equation}\label{FG}\tag{FG}
\begin{aligned}
    x_{k+1} &= y_k -\alpha \grad f(y_k), \\
    y_{k+1} &= x_{k+1} + \beta(x_{k+1} - x_{k}).
\end{aligned}
\end{equation}
In \cref{fig:convergence}, we compare the convergence of \eqref{GD}, \eqref{HB}, and \eqref{FG} on a simple quadratic function.

\begin{figure}[ht]
    \centering
    \includegraphics{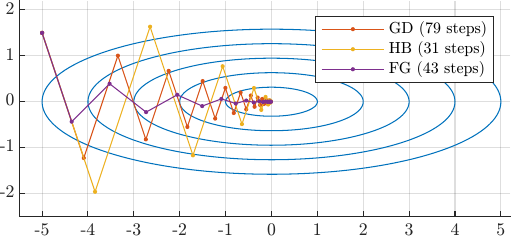}
    \caption{Comparison of different iterative algorithms applied to $f(u,v) = u^2 + 10 v^2$ (contour lines shown) with initial point $(-5,1.5)$. Each algorithm is tuned to have the fastest possible convergence rate. Steps (in brackets) indicate the number of iterations needed to achieve $\norm{x_k - x_\star} < 10^{-6}$. Gradient descent (GD) is less efficient than the Heavy Ball (HB) or Fast Gradient (FG) accelerated methods.}
    \label{fig:convergence}
\end{figure}

The tuning of an algorithm (the choice of $\alpha$ and $\beta$ in the examples above) can have a dramatic effect on the convergence behavior. However, it is typically not feasible to find the optimal tuning, as this would depend on $f$, which presumably is a function complicated enough to warrant being minimized numerically.
Instead, we seek performance guarantees over a \emph{class of functions}. For example, we may know a priori that $f$ is convex, or that it possesses a certain structural property, such as being quadratic. In the optimization literature, this sort of worst-case analysis is known as \emph{algorithm analysis}.

\subsection{The quadratic case}

When $f$ is a quadratic function as in least squares problems and as illustrated in \cref{fig:convergence}, we can explicitly parameterize $f$, which makes the algorithm analysis straightforward. Specifically, $\grad f(x) = Qx$ for some matrix $Q$. The dynamical systems for GD, HB, FG are therefore linear and convergence rate can be established via eigenvalue analysis. This leads to the following result.
\begin{prop}[\!\!{\cite[Prop.~1]{lessard16}}] \label{prop:quadratic}
    Suppose $f:\R^d\to\R$ is quadratic and satisfies $0 \prec m I_d \preceq \nabla^2 f(x) \preceq L I_d$. The smallest $\rho$ such that the iterates of the algorithms GD, HB, FG applied to $f$ satisfy
    \[
    \norm{x_k - x_\star} \leq C \rho^k \norm{x_0-x_\star}
    \quad\text{for some $C > 0$}
    \]
    is given by the following tunings (where $\kappa \defeq L/m$)

    \begin{table}[ht]
        \renewcommand{\arraystretch}{1.3}
        \centering
        \begin{tabular}{lll}
            \toprule
             & Optimal tuning & Rate bound \\
            \midrule
            \eqref{GD} & $\alpha = \frac{2}{L+m}$ & $\rho = \frac{\kappa-1}{\kappa+1}$\\
            \eqref{HB} & $\alpha = \frac{4}{(\sqrt L + \sqrt m)^2},\, \beta = \bigl(\frac{\sqrt{\kappa}-1}{\sqrt{\kappa}+1}\bigr)^2$ & $\rho = \frac{\sqrt{\kappa}-1}{\sqrt{\kappa}+1}$\\
            \eqref{FG} & $\alpha = \frac{4}{3L+m},\, \beta = \frac{\sqrt{3\kappa+1}-2}{\sqrt{3\kappa+1}+2}$ & $\rho = \frac{\sqrt{3\kappa+1}-2}{\sqrt{3\kappa+1}}$\\
            \bottomrule
        \end{tabular}
    \end{table}
\end{prop}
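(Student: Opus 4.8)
\emph{Proof idea.} The plan is to exploit the fact that, for a quadratic $f$, all three methods are \emph{linear time-invariant} (LTI) recursions, so the convergence rate can be read off from an eigenvalue computation. First I would write $\grad f(x) = Q(x-x_\star)$ where $Q \defeq \nabla^2 f$ is constant, symmetric, and satisfies $mI_d \preceq Q \preceq LI_d$, and re-express each algorithm in terms of the error $e_k \defeq x_k - x_\star$: \eqref{GD} becomes $e_{k+1} = (I-\alpha Q)e_k$; \eqref{HB} becomes $e_{k+1} = (1+\beta)e_k - \alpha Q e_k - \beta e_{k-1}$; and \eqref{FG}, after eliminating $y_k$, becomes $e_{k+1} = (I-\alpha Q)\bigl((1+\beta)e_k - \beta e_{k-1}\bigr)$. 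Diagonalizing $Q$ decouples each recursion into $d$ independent scalar subsystems indexed by the eigenvalues $\lambda \in [m,L]$ of $Q$, each governed by a $1\times1$ (GD) or $2\times2$ (HB, FG) transition matrix $\mathcal{A}(\lambda)$. Because $\norm{\mathcal{A}^k}\le C\rho^k$ holds for every $\rho$ exceeding the spectral radius of $\mathcal{A}$ — and for $\rho$ equal to it when every eigenvalue of maximal modulus is non-defective — the best achievable rate is governed by $\rho^\star \defeq \max_{\lambda\in[m,L]} \operatorname{specrad}\mathcal{A}(\lambda)$, which I would minimize over the tuning $(\alpha,\beta)$ and, separately, lower-bound over \emph{all} tunings. (At the optimal tunings the extreme modes form a Jordan block, so $\rho^\star$ is really the infimum rather than the minimum of admissible rates, i.e.\ $\limsup_k\norm{e_k}^{1/k}$, the polynomial prefactor being absorbed into $C$ for any strictly larger $\rho$.)

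For \eqref{GD}, $\mathcal{A}(\lambda)=1-\alpha\lambda$, so $\rho^\star=\max\{|1-\alpha m|,\,|1-\alpha L|\}$; equalizing the two terms gives $\alpha=2/(L+m)$ and $\rho=(\kappa-1)/(\kappa+1)$, which is optimal because $\rho^\star\ge|1-\alpha m|$ and $\rho^\star\ge|1-\alpha L|$ force $\rho^\star\ge(\kappa-1)/(\kappa+1)$ for every $\alpha$.

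For \eqref{HB}, $\mathcal{A}(\lambda)$ has characteristic polynomial $z^2-(1+\beta-\alpha\lambda)z+\beta$; for $\beta\ge0$ its roots are complex conjugates of modulus $\sqrt\beta$ exactly when $|1+\beta-\alpha\lambda|\le2\sqrt\beta$, and otherwise one real root has modulus at least $\sqrt\beta$. Since $\lambda\mapsto1+\beta-\alpha\lambda$ is affine, this condition holds throughout $[m,L]$ iff it holds at $\lambda\in\{m,L\}$, i.e.\ $(1-\sqrt\beta)^2\le\alpha m$ and $\alpha L\le(1+\sqrt\beta)^2$; this system is feasible precisely when $\sqrt\beta\ge(\sqrt\kappa-1)/(\sqrt\kappa+1)$, and at the boundary both inequalities are equalities, which I would solve to recover the tabulated $\alpha,\beta$ with $\rho=\sqrt\beta$ (indeed $\operatorname{specrad}\mathcal{A}(\lambda)\equiv\sqrt\beta$ on all of $[m,L]$ at this tuning). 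For \eqref{FG}, $\mathcal{A}(\lambda)$ has characteristic polynomial $z^2-a(1+\beta)z+a\beta$ with $a\defeq1-\alpha\lambda$; writing $s\defeq\sqrt{3\kappa+1}$ and substituting the tabulated tuning, the mode $\lambda=m$ has $a>0$ with a repeated root of modulus $(s-2)/s$, and the mode $\lambda=L$ has $a<0$ with two real roots of opposite sign whose larger modulus is also $(s-2)/s$ — the tuning is precisely the one balancing these two endpoints — giving $\rho=(s-2)/s$ as claimed.

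The conceptual content — the LTI reduction, the modal decoupling, and reading the rate off the spectral radius — is entirely contained above. The hard part will be the remaining verifications for the two-step methods: confirming that $\operatorname{specrad}\mathcal{A}(\lambda)$ is genuinely maximized at the endpoints $\lambda\in\{m,L\}$ (immediate from affineness for GD, but needing a short monotonicity argument for FG, where the root modulus must be tracked through both the complex-root and real-root regimes), carrying the $\lambda=L$ real-root estimate for \eqref{FG} through cleanly, and — most technically — establishing optimality, i.e.\ that no tuning beats the tabulated $\rho$. For HB and FG this last point is a genuine two-parameter minimax over the spectral-radius function rather than the one-line argument available for GD; one clean route to it is via the classical link between optimally tuned momentum methods and Chebyshev polynomials.
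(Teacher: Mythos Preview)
The paper does not give its own proof of this proposition; it is stated with a citation to \cite[Prop.~1]{lessard16} and preceded only by the remark that for quadratic $f$ the closed-loop dynamics are linear so ``convergence rate can be established via eigenvalue analysis.'' Your proposal is exactly that eigenvalue analysis carried out in detail, and the computations you sketch (the modal decoupling, the spectral-radius formulas for the $1\times1$ and $2\times2$ transition matrices, and the endpoint balancing at $\lambda\in\{m,L\}$) are correct and standard, so your approach matches what the paper has in mind.
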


The tunings from \cref{prop:quadratic} are the same as those used to generate the iterates shown in \cref{fig:convergence}.

\subsection{The smooth and strongly convex case}\label{sec:FmL}

Another popular function class is the set of \emph{smooth and strongly convex functions}, which we denote $\FmL$. These are continuously differentiable functions $f$ that satisfy:
\begin{enumerate}[(i)]
    \item $L$-Lipschitz gradients: $\norm{\grad f(x)\!-\!\grad f(y)} \!\leq\! L \norm{x\!-\!y}$ for all $x,y\in\R^d$.
    \item $m$-strong convexity: $f(x) - \frac{m}{2}\norm{x}^2$ is convex.
\end{enumerate}
Here, we assume $m$ and $L$ can be estimated or are otherwise known a priori. This class of functions includes the quadratic functions from \cref{prop:quadratic}, but also includes non-quadratic functions. Functions of this type occur for example in regularized logistic regression or support vector machines with a smoothed hinge loss \cite{statistical_learning_book}.

The class of smooth strongly convex functions cannot easily be parameterized, and the closed-loop dynamics of algorithms such as GD, HB, FG are generally nonlinear. Therefore, performing algorithm analysis requires a different approach from the quadratic case. Classical approaches typically involve clever manipulation of the inequalities that characterize Lipschitz gradients or strong convexity. One such approach is Nesterov's \emph{estimating sequences} \cite[\S2.2.1]{nesterov_book}, which yield the following result.

\begin{prop}\label{prop:estimating_sequences}
    Suppose $f:\R^d\to\R$ is smooth and strongly convex with parameters $0<m\leq L$. The method \eqref{FG} with parameters $\alpha = \frac{1}{L}$ and $\beta = \frac{\sqrt{L}-\sqrt{m}}{\sqrt{L}+\sqrt{m}}$ achieves 
    $
    \norm{x_k - x_\star} \leq C \rho^k \norm{x_0-x_\star}
    $
    for some $C>0$, with $\rho = \sqrt{1-\sqrt{\tfrac{m}{L}}}$.
\end{prop}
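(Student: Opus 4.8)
The plan is to adopt the Lyapunov-based viewpoint developed in the remainder of this tutorial: realize \eqref{FG} as a linear time-invariant system driven by the gradient, treat the gradient as the sole ``uncertain'' component, and exhibit an energy function that contracts geometrically with rate $\rho^2 = 1-\sqrt{m/L}$. First I would put \eqref{FG} in state-space form by eliminating $y_k$: with $x_{-1}\defeq x_0$, the recursion becomes $x_{k+1} = x_k + \beta(x_k-x_{k-1}) - \alpha\grad f\!\bigl(x_k+\beta(x_k-x_{k-1})\bigr)$, so that with state $\xi_k$ stacking $x_k-x_\star$ and $x_{k-1}-x_\star$, interpolation point $y_k-x_\star = \C\xi_k$, and input $u_k\defeq\grad f(y_k)$ (using $\grad f(x_\star)=0$), we have $\xi_{k+1}=\A\xi_k+\B u_k$ for explicit block matrices $\A,\B,\C$ depending only on $\alpha,\beta$.

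Second, I would record the inequalities that $f\in\FmL$ imposes along the trajectory. The key one is the sector/co-coercivity bound
\[
\bigl(u_k - m\,(y_k-x_\star)\bigr)^{\top}\bigl(L\,(y_k-x_\star)-u_k\bigr)\ \ge\ 0,
\]
together with the first-order interpolation inequalities of $\FmL$ relating $f(x_k)$, $f(y_k)$, $f(x_{k+1})$, $f_\star$, and $u_k$ (precisely the inequalities used later in the tutorial). Because $\grad f(x_\star)=0$, all of these can be written in terms of $\xi_k$, $u_k$, and the scalar gaps $f(x_k)-f_\star$.

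The creative step is the Lyapunov candidate. I would search for $V_k = \bigl(f(x_k)-f_\star\bigr) + \tfrac12\,\xi_k^{\top}\bm P\,\xi_k$ with $\bm P=\bm P^{\top}\succeq 0$, and for nonnegative multipliers on the inequalities above such that $V_{k+1}-\rho^2 V_k$ is (minus) a nonnegative combination of them, hence $\le 0$. With $\alpha=\tfrac1L$ and $\beta=\tfrac{\sqrt L-\sqrt m}{\sqrt L+\sqrt m}$ this reduces to a small, explicitly solvable linear matrix inequality; guided by Nesterov's estimating sequences, a natural choice is the block-rank-one $\bm P$ for which $\xi_k^{\top}\bm P\,\xi_k = m\norm{z_k-x_\star}^2$, where $z_k$ is a fixed affine combination of $x_k$ and $x_{k-1}$, so that $V_k$ reproduces the estimating-sequence potential in closed form. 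Checking $V_{k+1}\le\rho^2 V_k$ is then a lengthy but routine algebraic identity.

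Finally, iterating gives $V_k\le\rho^{2k}V_0$; since $m$-strong convexity gives $\tfrac m2\norm{x_k-x_\star}^2\le f(x_k)-f_\star\le V_k$, while $L$-smoothness and $x_{-1}=x_0$ give $V_0\le\tfrac{L+m}{2}\norm{x_0-x_\star}^2$, we obtain $\norm{x_k-x_\star}\le C\rho^k\norm{x_0-x_\star}$ with $\rho=\sqrt{1-\sqrt{m/L}}$ and $C=\sqrt{1+L/m}$. I expect the main obstacle to be identifying $\bm P$ (equivalently, the combination $z_k$ and the multipliers) that makes the one-step inequality hold at exactly this rate: obtaining $\rho^2=1-\sqrt{m/L}$ rather than a weaker bound requires the estimating-sequence-inspired ansatz, and a state-only quadratic with no $f(x_k)-f_\star$ term — known to be insufficient even for Heavy Ball — will not certify it.
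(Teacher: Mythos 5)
The paper gives no proof of Proposition~\ref{prop:estimating_sequences}; it attributes the result to Nesterov's estimating sequences, citing \cite[\S2.2.1]{nesterov_book}. Your proposal is, in substance, a Lyapunov recasting of that very argument: the candidate $V_k = \bigl(f(x_k)-f_\star\bigr) + \tfrac{m}{2}\norm{z_k - x_\star}^2$ with $z_k$ the appropriate affine combination of $x_k,x_{k-1}$ is exactly Nesterov's potential, and the nonnegative multipliers you would select correspond to the descent lemma at $(y_k,x_{k+1})$ (a consequence of $L$-smoothness and $\alpha=1/L$) plus the strong-convexity lower bounds at $(y_k,x_\star)$ and $(y_k,x_k)$. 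That plan is correct, it does yield $\rho^2=1-\sqrt{m/L}$, and your arithmetic for the constant ($\tfrac{m}{2}\norm{x_k-x_\star}^2 \le V_k \le \rho^{2k}\,\tfrac{L+m}{2}\norm{x_0-x_\star}^2$, hence $C=\sqrt{1+L/m}$) checks out.

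One distinction worth flagging relative to the machinery developed later in the tutorial: your Lyapunov function carries $f(x_k)-f_\star$, the function value at the $x$-iterate, whereas the paper's lifted candidate \eqref{eq:lyap} uses $p^\tp Z \f$ with $\f$ collecting values $f(y_{k-1}),\dots,f(y_{k-\ell})$ at the \emph{query} points $y$ (where interpolation conditions are actually imposed); for \eqref{FG} these differ. So your argument sits closer to the classical estimating-sequences template than to the automated LMI approach of \cref{thm:lmi_cvx}, where $P$, $p$, and the interpolation multipliers are decision variables rather than hand-crafted. What the hand-crafted route buys is a closed-form certificate with an explicit constant $C$; what the automated route buys is that it also discovers tighter rates for other FG tunings (the FG versus FG* curves in \cref{fig:rate_comparison}), which your fixed ansatz cannot. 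Both are sound. The only genuine gap in your writeup is that the central one-step inequality $V_{k+1}\le\rho^2 V_k$ is asserted as ``a lengthy but routine algebraic identity'' rather than displayed, but this is indeed the standard verification and works as claimed.
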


\subsection{Automated algorithm analysis}

In recent years, there has been an effort to systematize, unify, and automate the process of algorithm analysis.

Drori and Teboulle \cite{drori-teboulle} pioneered the idea that the worst-case performance of an algorithm (after a fixed number of iterations) could be characterized by a semidefinite program (SDP) whose size scaled with the number of iterations. It was later shown that this SDP could be convexified, leading to the so-called \emph{Performance Estimation Problem} formulation \cite{taylor2017smooth}.

Another popular approach is to express iterative methods as discretizations of gradient flows \cite{su2014differential}. This view led to the unification of various accelerated algorithms, including \eqref{FG}, and the derivation of an associated Lyapunov-based proof of convergence \cite{wilson2021lyapunov}.

Yet another approach is to look for an asymptotic performance guarantee by leveraging tools from robust control \cite{lessard16}. Here, the idea is to view the algorithm as a Lur'e problem \cite{lure_postnikov} (a linear time-invariant system in feedback with a static nonlinearity) and to adapt the integral quadratic constraint (IQCs) approach \cite{iqc} to derive performance bounds. This approach also requires solving a convex SDP, but unlike PEP, its size is small and fixed (does not depend on the number of iterations).

\subsection{Tutorial overview}

In this tutorial, we will present an alternative Lyapunov-based approach to algorithm analysis, described in more detail in \cite{optalg,dissalg,van2022absolute}, that shares features in common with each of the aforementioned works.
In the sections that follow, we will: 1) frame algorithm analysis as a Lur'e problem (as in the IQC approach); 2) use interpolation conditions to describe the set of smooth strongly convex functions (as in the PEP approach); 3) describe the lifting procedure we will use; 4) show how Lyapunov functions can be used to certify performance via solving convex SDPs; and 5) present numerical examples that illustrate the effectiveness of this approach for certifying both convergence rate and robustness to gradient noise.

\section{Iterative algorithms as Lur'e problems}

To express the iterative algorithm as a Lur'e problem, we begin by defining signals corresponding to the input $y_k$ and output $u_k$ of the gradient $\grad f$. For example, \eqref{FG} can be rewritten as
\begin{align*}
    x_{k+1} &= y_k - \alpha u_k, \\
    y_{k+1} &= x_{k+1} + \beta (x_{k+1} - x_k), \\
    u_k &= \grad f(y_k).
\end{align*}
Now define the augmented state $\xi_k \defeq (x_k,x_{k-1})$ and put into standard state-space form to obtain
\begin{subequations}\label{eq:ss_nest}
\begin{align}
    \xi_{k+1} &= \bmat{ (1+\beta)I_d & -\beta I_d \\ I_d & 0_d} \xi_k + \bmat{-\alpha I_d \\ 0_d} u_k, \\
    y_k &= \bmat{ (1+\beta)I_d & -\beta I_d } \xi_k, \\
    u_k &= \grad f(y_k).
\end{align}
\end{subequations}
To avoid the use of Kronecker products, it is convenient to express states, inputs, and outputs as \emph{row vectors}, where there are $d$ columns corresponding to the $d$ dimensions of the original system. This transforms \eqref{eq:ss_nest} into%
\begin{subequations}\label{eq:ss_nest2}
    \begin{align}
        \xi_{k+1} &= \bmat{ 1+\beta & -\beta \\ 1& 0} \xi_k + \bmat{-\alpha  \\ 0} u_k, \\
        y_k &= \bmat{ 1+\beta & -\beta } \xi_k, \\
        u_k &= \grad f(y_k),
    \end{align}
    \end{subequations}
where $\xi_k \in \R^{2\times d}$, $u_k \in \R^{1\times d}$, $y_k \in \R^{1\times d}$, and the gradient maps row vectors to row vectors: $\grad f : \R^{1\times d} \to \R^{1\times d}$.
Because of this convention, $\norm{\cdot}$ denotes the Frobenius norm, for example, $\norm{\xi_k}^2 \defeq \sum_{i=1}^2\sum_{j=1}^d (\xi_k)_{ij}^2$.

We will use this convention from now on. Viewed as a block diagram, \cref{eq:ss_nest2} has the following representation.%
\begin{figure}[ht]
    \centering
    \begin{tikzpicture}[thick]
        \tikzstyle{block}=[draw,rounded corners,minimum width=1cm, minimum height=8mm,inner sep=2mm]
        \tikzstyle{arr}=[->,>=latex,rounded corners]
        \node[block] (G) {$G$};
        \node[block, below= 4mm of G] (df) {$\grad f$};
        \draw[arr] (G) -- +(1.2,0) |- node[pos=0.25,auto]{$y$} (df);
        \draw[arr] (df) -- +(-1.2,0) |- node[pos=0.25,auto]{$u$} (G);
    \end{tikzpicture}
\end{figure}

\noindent Here, $G$ is a single-input single-output (SISO) system that depends on the algorithm, and it is understood that $G$ acts separately on each of the $d$ dimensions of the input. The system $G$ has different realizations depending on the algorithm:
\begin{table}[ht]
    \centering
    \begin{tabular}{ccc}
    \toprule
        GD & HB & FG \\
    \midrule
        $\!\left[\begin{array}{c|c}
            1 & -\alpha \\\hline
            1 & 0
        \end{array}\right]\!$ &
        $\squeezemat{-2pt}\!\left[\begin{array}{cc|c}
            1+\beta & -\beta & -\alpha \\
            1 & 0 & 0 \\ \hline
            1 & 0 & 0
        \end{array}\right]\!$ &
        $\squeezemat{-2pt}\!\left[\begin{array}{cc|c}
            1+\beta & -\beta & -\alpha \\
            1 & 0 & 0 \\ \hline
            1+\beta & -\beta & 0
        \end{array}\right]\!$ \\
    \bottomrule
    \end{tabular}
\end{table}

The idea of representing iterative algorithms as dynamical systems is not new; see for example \cite{polyak_book}. It is not restricted to the three algorithms above either; it can be used to represent distributed optimization algorithms \cite{distralg}, operator-splitting methods \cite{iqcadmm_ICML,dissalg}, and a variety of other numerical algorithms \cite{bhaya2006control}.

\section{Lifted algorithm dynamics}

Suppose the algorithm dynamics satisfy the state space equations (again using the row vector convention)
\begin{subequations}\label{eq:ssalg}
    \begin{align}
        \xi_{k+1} &= A \xi_k + B \left( u_k + w_k \right), \\
        y_k &= C \xi_k, \\
        u_k &= \grad f( y_k ).
    \end{align}
\end{subequations}
In contrast with \eqref{eq:ss_nest2}, we have included gradient noise $w_k$, which will allow us to analyze algorithm sensitivity in the sequel.
In order to find the tightest possible performance bounds, we will search for a Lyapunov function that depends on a finite history of past algorithm iterates and function values. To this end, define the \emph{lifted} iterates
\begin{equation}\label{eq:lifted_iterates}
    \y_k \defeq \bmat{y_k\\y_{k-1}\\\vdots\\y_{k-\ell}}\!,\;
    \u_k \defeq \bmat{u_k\\u_{k-1}\\\vdots\\u_{k-\ell}}\!,\;
    \f_k \defeq \bmat{f_k\\f_{k-1}\\\vdots\\f_{k-\ell}}\!.
\end{equation}
Also, define the truncation matrices $Z_+$, $Z$ as follows.
\begin{equation}\label{eq:Z}
    Z_+ \defeq \bmat{I_\ell & 0_{\ell \times 1}}
    \quad\text{and}\quad
    Z \defeq \bmat{0_{\ell \times 1} & I_\ell}.
\end{equation}
Multiplying a lifted iterate by $Z$ on the left removes the most recent iterate (time $k$), while using $Z_+$ removes the oldest iterate (time $k-\ell$). Now define the lifted state
\begin{equation}\label{eq:state_aug}
	\x_k \defeq \bmat{ \xi_k-\xi_\star \\ Z \y_k \\ Z \u_k} \in \R^{(n+2\ell)\times d},
\end{equation}
which is the current state $\xi_k$ and the $\ell$ previous inputs $u_{k-1},\ldots,u_{k-\ell}$ and outputs $y_{k-1},\ldots,y_{k-\ell}$ of the original system. The \emph{lifted system} is a system with state $\x_k$, input $(u_k,w_k)$, and output $(\y_k,\u_k)$ that is consistent with the original dynamics \eqref{eq:ssalg}. These turn out to be
  \begin{align}
    \x_{k+1}\! &= \!\underbrace{\squeezemat{-3pt}
    \bmat{ A & 0 & 0 \\
    Z_+ \e_1 C & Z_+ Z^\tp & 0 \\
    0 & 0 & Z_+ Z^\tp }}_{\A}\!\!\x_k + \! \underbrace{\bmat{B \\ 0 \\ Z_+ \e_1\!}}_{\B}\!\! u_k +\! \underbrace{\bmat{B \\ 0 \\ 0}}_{\H}\!\! w_k \notag \\
    \bmat{\y_k \\ \u_k}\! &= \!\underbrace{\bmat{ \e_1 C & Z^\tp & 0 \\ 0 & 0 & Z^\tp }}_{\C} \x_k + \underbrace{\bmat{ 0 \\ \e_1}}_{\D} u_k, \label{eq:sslifted}
  \end{align}
where $\e_1=\bmat{1 & 0 & \cdots & 0}^\tp \in\R^{\ell+1}$. We can recover the iterates of the original system (shifted to the fixed point) by projecting the augmented state and the input as
\begin{multline}
	\tilde \xi_k = \underbrace{\bmat{I_n & 0_{n\times(2\ell+1)}}}_{\X} \!\bmat{\x_k \\ u_k}\!, \;
	\tilde y_k = \underbrace{\bmat{C & 0_{1\times(2\ell+1)}}}_{\Y} \!\bmat{\x_k \\ u_k}\!, \\
  \text{and}\quad
	\tilde u_k = \underbrace{\bmat{0_{1\times(n+2\ell)} & 1}}_{\U} \bmat{\x_k \\ u_k}. \label{eq:sslifted2}
\end{multline}
Note that when $\ell=0$, the lifted system reduces to the original system. That is, \eqref{eq:sslifted} becomes \eqref{eq:ssalg}.

\section{Interpolation Conditions}\label{sec:interpolation}

For the remainder of this paper, we will restrict our attention to smooth strongly convex functions, $f\in\FmL$.

The characterization of $\FmL$ from \cref{sec:FmL} depends on a continuum of points $x,y\in\R^d$. However, we will be analyzing the algorithm at a discrete set of iterates, so we will instead characterize $\FmL$ using \emph{interpolation conditions}. These conditions were developed in \cite{taylor2017smooth} and provide necessary and sufficient conditions under which a given finite set of input-output data can be interpolated by a function in $f\in\FmL$.
\begin{thm}[{\!\!\cite[Thm.~4]{taylor2017smooth}}]\label{thm:interp}
    Consider the set $\{(y_k,u_k,f_k)\}$ for $k=1,\dots,m$. The following are equivalent.
    \begin{enumerate}[(i)]
        \item There exists a function $f\in\FmL$ satisfying
        \[
          f(y_k) = f_k\quad\text{and}\quad \grad f(y_k) = u_k\quad\text{for }k=1,\dots,m.  
        \]
        \item The following inequality holds for $i,j \in \{1,\dots,m\}$.\label{interpii}
        \begin{multline*}
            2(L-m)(f_i-f_j)-mL\norm{y_i-y_j}^2\\
            +2(y_i-y_j)^\tp(mu_i-Lu_j)-\norm{u_i-u_j}^2\geq 0.
        \end{multline*}
    \end{enumerate}
\end{thm}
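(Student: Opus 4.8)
The plan is to prove the two implications separately, after reducing the smooth strongly convex case to the smooth convex case. We may assume $m<L$ throughout (the borderline case $m=L$ is degenerate). The reduction is the standard shift: set $h_k \defeq f_k - \tfrac m2\norm{y_k}^2$ and $g_k \defeq u_k - m\,y_k$, let $\bar L \defeq L-m$, and observe that $f\in\FmL$ interpolates $\{(y_k,u_k,f_k)\}$ if and only if $h \defeq f - \tfrac m2\norm{\cdot}^2$ is convex, has $\bar L$-Lipschitz gradient, and interpolates $\{(y_k,g_k,h_k)\}$ in the sense that $h(y_k)=h_k$ and $\grad h(y_k)=g_k$. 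A completion of squares then shows that condition~(ii) is equivalent to the \emph{smooth convex interpolation inequality}
\[
    h_i \ge h_j + g_j^\tp(y_i-y_j) + \tfrac{1}{2\bar L}\norm{g_i-g_j}^2 \qquad\text{for all }i,j,
\]
so it suffices to establish the theorem for convex functions with $\bar L$-Lipschitz gradient applied to the data $\{(y_k,g_k,h_k)\}$.

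For (i)~$\Rightarrow$~(ii): if $h$ is convex with $\bar L$-Lipschitz gradient, the displayed inequality is the classical cocoercivity-type bound, obtained by applying the descent lemma to $z\mapsto h(z)-\grad h(x)^\tp z$ at its minimizer $z=x$; I would reprove this in one line. Substituting back the definitions of $h_k$ and $g_k$ recovers~(ii).

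The substantive direction is (ii)~$\Rightarrow$~(i), where I would construct the interpolating function through its convex conjugate, using that $h$ is convex with $\bar L$-Lipschitz gradient iff $h^*$ is closed, proper, and $\tfrac1{\bar L}$-strongly convex, and that the conditions $h(y_k)=h_k$, $\grad h(y_k)=g_k$ translate---via the Fenchel--Young equality and the subgradient inversion $g_k\in\partial h(y_k)\Leftrightarrow y_k\in\partial h^*(g_k)$---into $y_k\in\partial h^*(g_k)$ and $h^*(g_k)=g_k^\tp y_k-h_k$. So the construction proceeds on the conjugate side: write $h^* = \tfrac1{2\bar L}\norm{\cdot}^2 + \phi$ with $\phi$ to be chosen convex; the requirements become $\phi(g_k)=t_k$ and $v_k\in\partial\phi(g_k)$, where $t_k \defeq g_k^\tp y_k - h_k - \tfrac1{2\bar L}\norm{g_k}^2$ and $v_k \defeq y_k - \tfrac1{\bar L}g_k$; and a short computation shows that hypothesis~(ii) (equivalently, the displayed smooth convex inequality) is exactly the condition $t_i\ge t_j + v_j^\tp(g_i-g_j)$ under which the piecewise-affine convex function
\[
    \phi(z) \defeq \max_{j}\,\bigl\{\, t_j + v_j^\tp(z-g_j)\,\bigr\}
\]
takes the value $t_k$ at $g_k$ with $v_k$ as a subgradient. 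Setting $h \defeq (h^*)^*$ then gives a finite-valued convex function with $\bar L$-Lipschitz gradient; the conjugacy identities above show it interpolates $\{(y_k,g_k,h_k)\}$, and $f \defeq h + \tfrac m2\norm{\cdot}^2$ then lies in $\FmL$ and interpolates $\{(y_k,u_k,f_k)\}$.

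I expect the main difficulty to lie in two places. First, the algebra of the reduction must be done carefully: matching~(ii) to the smooth convex inequality and then to the scalar inequality $t_i\ge t_j+v_j^\tp(g_i-g_j)$ involves several completions of squares, with the cross terms producing exactly the $\tfrac1{2\bar L}\norm{g_i-g_j}^2$ correction. Second, the convex-analytic bookkeeping in (ii)~$\Rightarrow$~(i) needs care: $\phi$ need not be differentiable at the points $g_k$ (so one must argue with subgradients on the conjugate side, not gradients), one must confirm that $h^*$ as constructed is closed and proper, and one must invoke the fact that the conjugate of a strongly convex closed function is finite everywhere with Lipschitz gradient in order to conclude that $h=(h^*)^*$ genuinely has $\bar L$-Lipschitz gradient. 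Once these points are settled, the interpolation identities follow directly from Fenchel--Young applied to $\phi$.
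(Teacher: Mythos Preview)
The paper does not supply its own proof of this theorem; it is stated with a citation to \cite[Thm.~4]{taylor2017smooth} and used as a black box, so there is no in-paper argument to compare against. That said, your proposal is sound and is essentially the argument of the original reference: the shift $h=f-\tfrac m2\norm{\cdot}^2$ reduces $\FmL$-interpolation to $\mathcal{F}_{0,L-m}$-interpolation, condition~(ii) rewrites (after your completion of squares) as the cocoercivity-type inequality $h_i\ge h_j+g_j^\tp(y_i-y_j)+\tfrac1{2\bar L}\norm{g_i-g_j}^2$, and the nontrivial direction is handled by building the conjugate $h^*=\tfrac1{2\bar L}\norm{\cdot}^2+\phi$ with $\phi$ the max of the affine minorants determined by the data. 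One small point worth tightening: the identity you derive is that $t_i\ge t_j+v_j^\tp(g_i-g_j)$ corresponds to the smooth-convex inequality with the roles of $i$ and $j$ exchanged, so when you say the two are ``exactly'' the same you are implicitly using that (ii) is assumed for all ordered pairs; this is harmless but should be stated. The convex-analytic checks you flag (closedness/properness of $h^*$, differentiability and $\bar L$-Lipschitz gradient of $(h^*)^*$ from strong convexity of $h^*$, and recovery of $h(y_k)=h_k$, $\grad h(y_k)=g_k$ via Fenchel--Young) are exactly the right ones and go through without surprises.
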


\noindent Although we restrict our attention to smooth strongly convex functions, interpolation conditions can be derived for other function classes as well. See for example \cite{taylor2017smooth,bousselmi2023interpolation}.

Using our row vector convention, if we suppose that $y_k,u_k\in\R^{1\times d}$, \cref{interpii} in \cref{thm:interp} can be written as
\begin{equation}\label{eq:qij}
  q_{ij} \defeq
  \trace\bmat{y_i\\y_j\\u_i\\u_j}^\tp \!\!\!H \bmat{y_i\\y_j\\u_i\\u_j} + h^\tp \bmat{f_i \\ f_j} \geq 0,\quad\text{where}    
\end{equation}
\[
H \defeq
\squeezemat{-3pt}
\bmat{ -mL & mL & m & -L\\ mL & -mL & -m & L\\ m & -m & -1 & 1\\ -L & L & 1 & -1}
\;\text{and}\;\;
h \defeq 2(L\!-\!m)\!\bmat{1 \\ -1}\!. 
\]
Consider an algorithm in lifted form \eqref{eq:sslifted}. We are interested in writing down as many valid inequalities as we can that relate its iterates. To this end, we will consider nonnegative linear combinations of the inequalities \eqref{eq:qij}. The computation is presented in the following corollary.

\begin{cor}\label{lem:cvx_func_ineq}
    Consider a function $f\in \FmL$, and let $y_\star\in\R^{1\times d}$ denote its optimizer, $u_\star=0\in\R^{1\times d}$ the optimal gradient, and $f_\star\in\R$ the optimal function value. Let $y_k,\dots,y_{k-\ell} \in \R^{1\times d}$ be a sequence of iterates, and define $u_{k-i} \defeq \grad f(y_{k-i})$ and $f_{k-i}\defeq f(y_{k-i})$ for $i=0,\dots,\ell$. Using these values, define the augmented vectors $\y_k$, $\u_k$, $\f_k$ as in~\eqref{eq:lifted_iterates}. Finally, define the index set $\mathcal{I} \defeq \{1,2,\ldots,\ell+1,\star\}$ and let $\e_i$ denote the $i\textsuperscript{th}$ unit vector in $\R^{\ell+1}$ with $\e_\star \defeq 0\in\R^{\ell+1}$. Then the inequality
    \begin{equation}\label{eq:cvx_ineq}
      \trace \bmat{\y^t \\ \u^t}^\tp \Pi(\Lambda) \bmat{\y^t \\ \u^t} + \pi(\Lambda)^\tp \f^t \ge 0
    \end{equation}
    holds for all $\Lambda\in\R^{(\ell+2)\times(\ell+2)}$ such that $\Lambda_{ij}\geq 0$, where $\Pi(\Lambda)$ and $\pi(\Lambda)$ are defined in \cref{eq:multipliers}.
\end{cor}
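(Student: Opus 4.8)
The plan is to view \eqref{eq:cvx_ineq} as nothing more than a nonnegative linear combination of the pairwise interpolation inequalities \eqref{eq:qij}, repackaged in the lifted coordinates. First I would invoke \cref{thm:interp} on the $\ell+2$ triples indexed by $\mathcal I=\{1,\dots,\ell+1,\star\}$, namely the $\ell+1$ iterate triples $(y_{k-i},u_{k-i},f_{k-i})$ together with the optimizer triple $(y_\star,0,f_\star)$. Since by hypothesis all of this data comes from a single $f\in\FmL$ (with $\grad f(y_\star)=u_\star=0$), statement~(i) of \cref{thm:interp} holds by construction, so statement~(ii) holds as well; in the notation of \eqref{eq:qij} this says exactly that $q_{ij}\ge 0$ for every $i,j\in\mathcal I$. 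Because each summand of $q_{ij}$ involves $y$ and $f$ only through the differences $y_i-y_j$ and $f_i-f_j$, there is no loss in assuming the optimizer has been translated so that $y_\star=0$ and $f_\star=0$; this is what makes the convention $\e_\star=0$ in the statement consistent, with the $\y_k,\u_k,\f_k$ appearing in \eqref{eq:cvx_ineq} understood relative to the fixed point.

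Next, given weights $\Lambda_{ij}\ge 0$ and the inequalities $q_{ij}\ge 0$, the scalar $\sum_{i,j\in\mathcal I}\Lambda_{ij}q_{ij}$ is nonnegative as a sum of nonnegative terms, and the remaining work is to check that it coincides with the left-hand side of \eqref{eq:cvx_ineq}. For each index $i\in\mathcal I$ one has $y_i=\e_i^\tp\y_k$, $u_i=\e_i^\tp\u_k$, and $f_i=\e_i^\tp\f_k$ (using $\e_\star=0$), so the stacked vector $\bmat{y_i\\y_j\\u_i\\u_j}$ is a fixed selection matrix $S_{ij}$ built from $\e_i,\e_j$ applied to $\bmat{\y_k\\\u_k}$, and $\bmat{f_i\\f_j}=T_{ij}\f_k$ similarly. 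Substituting into \eqref{eq:qij}, using linearity of the trace, and collecting terms, I would identify $\Pi(\Lambda)=\sum_{i,j}\Lambda_{ij}S_{ij}^\tp H S_{ij}$ and $\pi(\Lambda)^\tp=\sum_{i,j}\Lambda_{ij}h^\tp T_{ij}$ — precisely the content of \eqref{eq:multipliers} — at which point $\sum_{i,j}\Lambda_{ij}q_{ij}$ becomes the left-hand side of \eqref{eq:cvx_ineq}, which is therefore $\ge 0$.

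I expect the only genuine effort to be in this last repackaging: keeping the index conventions straight (the $i$-th block of $\y_k$ is $y_{k-i+1}$), translating correctly between the vector/inner-product form of \cref{thm:interp} and the row-vector/Frobenius-trace form used here, and verifying that the assembled $\Pi(\Lambda)$ and $\pi(\Lambda)$ agree term-by-term with the definitions in \eqref{eq:multipliers}. The subtle point to get right is the optimizer index $\star$: one must confirm that $\e_\star=0$ together with $u_\star=0$ genuinely reproduces the inequalities $q_{i\star}\ge 0$ once the optimizer is shifted to the origin, so that the nonnegative combination loses no information relative to \cref{thm:interp}.
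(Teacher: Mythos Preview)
Your proposal is correct and matches the paper's approach exactly: the paper states the corollary without proof, having just explained that it amounts to taking nonnegative linear combinations of the inequalities \eqref{eq:qij} and repackaging them in the lifted coordinates via \eqref{eq:multipliers}. Your handling of the $\star$ index via the translation $y_\star=0$, $f_\star=0$ (exploiting that $q_{ij}$ depends on $y$ and $f$ only through differences, while $u_\star=0$ automatically) is the right way to make the convention $\e_\star=0$ consistent.
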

\begin{figure*}[!b]
    \begin{equation}\label{eq:multipliers}
        \Pi(\Lambda) \defeq \sum_{i,j\in \mathcal{I}} \Lambda_{ij} \bmat{ -mL\,(\e_i-\e_j)(\e_i-\e_j)^\tp & (\e_i-\e_j)(m\e_i-L\e_j)^\tp \\[2pt] (m\e_i-L\e_j)(\e_i-\e_j)^\tp & -(\e_i-\e_j)(\e_i-\e_j)^\tp }
        ,\;\;
        \pi(\Lambda) \defeq 2\,(L-m) \sum_{i,j\in \mathcal{I}} \Lambda_{ij}\,(\e_i-\e_j)
    \end{equation}
\end{figure*}

\section{Lyapunov performance certification}

In \cref{prop:quadratic,prop:estimating_sequences}, we used \emph{convergence rate} as a proxy for algorithm performance. For an algorithm $G$ of the form \eqref{eq:ssalg} with fixed point $\xi_\star$, we define convergence rate formally as follows. Assume $w_k=0$ for all $k$ and let
\begin{multline*}
    \rate(G)
    \defeq \\
    \inf\left\{ r > 0 \;\middle|\; \sup_{f\in\FmL}\, \sup_{\xi_0\in\R^{n\times d}}\,\sup_{k\geq 0}\, \frac{\norm{\xi_k - \xi_\star}}{r^k \norm{\xi_0-\xi_\star}}<\infty \right\}.
\end{multline*}
A smaller $\rho_G$ is desirable because it means that the algorithm is guaranteed to converge faster to its fixed point.

Another performance metric of interest is \emph{sensitivity to additive gradient noise}.
Suppose the noise inputs $w = (w_0,w_1,\dots)$ are random, zero-mean, bounded variance, and independent across timesteps (not necessarily identically distributed). Specifically, $\E w_k = 0$, $\E w_k^\tp w_k \preceq \sigma^2 I_d$, and $\E w_i^\tp w_j = 0 $ for all $i\neq j$. We denote the set of all such joint distributions over $w$ as $\mathcal{P}_\sigma$.


For any fixed algorithm $G$, function $f\in\FmL$, initial point $\xi_0$, and noise distribution $w\sim \mathbb{P} \in \mathcal{P}_\sigma$, consider the stochastic iterate sequence $y_0,y_1,\dots$ produced by $G$ and let $y_\star \defeq \argmin_{y\in\R^d} f(y)$ be the unique minimizer of $f$. We define the noise sensitivity to be:
\begin{multline*}\label{def:gamma}
\sensitivity(G,\sigma^2) \defeq \sup_{f\in\FmL}\, \sup_{\xi_0\in\R^{n\times d}}\, \sup_{\mathbb{P}\in \mathcal{P}_\sigma}\\
\limsup_{T\to \infty}  \sqrt{\E_{w\sim \mathbb{P}}\, \frac{1}{T} \sum_{k=0}^{T-1} \normm{ y_k-y_\star }^2}.
\end{multline*}
A smaller sensitivity is desirable because it means that the algorithm is more robust to gradient noise.

Both convergence rate and noise sensitivity can be computed exactly using eigenvalue analysis for the case of quadratic functions \cite{mihailo,optalg}. For smooth strongly convex functions, IQC theory can instead be used to bound the rate and sensitivity \cite{scherer}. 

We will now show an alternative Lyapunov approach, originally proposed in \cite{dissalg,optalg,van2022absolute}. Specifically, we will use a Lyapunov function of the Lur'e--Postnikov form (quadratic in the state plus integral of the nonlinearity). For our lifted system representation, this takes the form
\begin{equation}\label{eq:lyap}
  V(\x,\f) \defeq \trace\bigl(\x^\tp P \x\bigr) + p^\tp Z\f,
\end{equation}
where $P$ and $p$ are parameters that must be optimized ($P$ need not be positive definite), and the matrix $Z$ is defined in~\eqref{eq:Z}. Both convergence rate and noise sensitivity can be verified by finding $P$ and $p$ such that the Lyapunov function \eqref{eq:lyap} satisfies some algebraic conditions.

\begin{lem}\label{lem:rate}
Consider the algorithm dynamics \eqref{eq:ssalg} and let $V$ be of the form \eqref{eq:lyap}. If the iterates of the lifted system \eqref{eq:sslifted} with $w_k=0$ satisfy the conditions
\begin{enumerate}[(i)]
    \item\label{ratei} $V(\x_k,\f_k) \geq \norm{\xi_k-\xi_\star}^2$ and
    \item\label{rateii} $V(\x_{k+1},\f_{k+1}) \leq r^2\,V(\x_k,\f_k)$ for some $r>0$,
\end{enumerate}
then $\rate(G) \leq r$.
\end{lem}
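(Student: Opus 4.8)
The plan is a standard Lyapunov descent argument; the only genuine content is the bookkeeping at $k=0$. First, with $w_k=0$ as in the hypothesis, iterate condition~(ii): an easy induction on $k$ gives $V(\x_k,\f_k)\le r^{2k}\,V(\x_0,\f_0)$ for every $k\ge 0$ (here we use that the hypotheses hold along the entire trajectory of the lifted system, including the first $\ell$ ``transient'' steps). Combining with condition~(i) yields $\norm{\xi_k-\xi_\star}^2\le V(\x_k,\f_k)\le r^{2k}\,V(\x_0,\f_0)$, hence $\norm{\xi_k-\xi_\star}\le r^k\sqrt{V(\x_0,\f_0)}$.

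Next I would bound $V(\x_0,\f_0)$ by a fixed multiple of $\norm{\xi_0-\xi_\star}^2$, uniformly in $f\in\FmL$ and $\xi_0$. Normalize so that the minimizer sits at the origin (hence $y_\star=0$) and $f_\star=0$; shifting the decision variable and adding a constant to $f$ changes neither membership in $\FmL$ nor the iterates. Taking the lifted history to start at the fixed point ($y_{-i}=0$, $u_{-i}=0$, $f_{-i}=0$ for $i=1,\dots,\ell$), the definitions \eqref{eq:lifted_iterates}--\eqref{eq:state_aug} give $\x_0=\bmat{\xi_0-\xi_\star \\ 0 \\ 0}$ and $Z\f_0=0$, so from \eqref{eq:lyap}, $V(\x_0,\f_0)=\trace\!\big((\xi_0-\xi_\star)^\tp P_{11}(\xi_0-\xi_\star)\big)\le\mu\,\norm{\xi_0-\xi_\star}^2$, where $P_{11}$ is the leading $n\times n$ block of $P$ and $\mu\defeq\max\{0,\ \lambda_{\max}(\tfrac12(P_{11}+P_{11}^\tp))\}$ depends only on $P$.

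Combining the two displays, $\norm{\xi_k-\xi_\star}\le\sqrt{\mu}\,r^k\norm{\xi_0-\xi_\star}$ for all $k\ge 0$, all $f\in\FmL$, and all $\xi_0$; thus the supremum in the definition of $\rate(G)$ is finite (bounded by $\sqrt\mu$) for this value of $r$, and therefore $\rate(G)\le r$. The step that needs care is the uniformity of the constant $C=\sqrt\mu$: it is the same for every $f$ and $\xi_0$ precisely because the linear term $p^\tp Z\f$ in \eqref{eq:lyap} vanishes once the history and $f_\star$ are referenced to the fixed point, leaving a pure quadratic in $\xi_0-\xi_\star$ whose operator norm is controlled by $P$ alone. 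Everything else is routine algebra.
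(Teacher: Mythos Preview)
Your proof is correct and follows the same Lyapunov descent argument as the paper: iterate condition~(ii), apply condition~(i), and bound $V(\x_0,\f_0)$ in terms of $\norm{\xi_0-\xi_\star}^2$. You are in fact more careful than the paper about this last step: where the paper simply writes $V(\x_0,\f_0)\le C_0\norm{\xi_0-\xi_\star}^2+C_1$ with constants ``depending on the initialization'' and then appeals to the definition of $\rate(G)$, you obtain $C_1=0$ explicitly by initializing the lifted history at the fixed point, which cleanly produces a bound uniform in $f$ and $\xi_0$.
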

\begin{proof}
Applying \cref{ratei,rateii} for $k \geq \ell$, we obtain
\begin{align*}
    \norm{\xi_k-\xi_\star}^2 &\leq V(\x_k,\f_k) \leq \cdots \leq r^{2k}V(\x_0,\f_0) \\
    &\leq r^{2k} \bigl(C_0 \norm{\xi_0 - \xi_\star}^2 + C_1 \bigr),
\end{align*}
where $C_0$ and $C_1$ are constants that depend on the initialization of the algorithm and the parameters $P,p$. The result follows from applying the definition of $\rate(G)$.
\end{proof}

\begin{lem}\label{lem:sens}
    Consider the algorithm dynamics \eqref{eq:ssalg} and let $V$ be of the form \eqref{eq:lyap}. If the iterates of the lifted system \eqref{eq:sslifted} satisfy the conditions
    \begin{enumerate}[(i)]
        \item\label{sensi} $\E V(\x_k,\f_k) \geq 0$
        \item\label{sensii} $\E V(\x_{k+1},\f_{k+1}) - \E V(\x_k,\f_k) + \E\,\norm{y_k-y_\star}^2 \leq \gamma^2$ for some $\gamma > 0$,
    \end{enumerate}
    then $\sensitivity(G,\sigma^2) \leq \gamma$.
\end{lem}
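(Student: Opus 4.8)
The plan is to mimic the telescoping argument used in \cref{lem:rate}, but now summing the one-step dissipation inequality \cref{sensii} over a long horizon and passing to a Cesàro average. First I would fix an arbitrary $f\in\FmL$, initial condition $\xi_0$, and noise distribution $\mathbb{P}\in\mathcal{P}_\sigma$, and consider the resulting stochastic trajectory of the lifted system \eqref{eq:sslifted}. Summing \cref{sensii} over $k=0,1,\dots,T-1$ makes the $\E V$ terms telescope, leaving
\[
  \E V(\x_T,\f_T) - \E V(\x_0,\f_0) + \sum_{k=0}^{T-1}\E\,\norm{y_k-y_\star}^2 \;\le\; T\gamma^2 .
\]
Using \cref{sensi} at $k=T$ to discard the nonnegative term $\E V(\x_T,\f_T)$ and rearranging yields
\[
  \frac1T\sum_{k=0}^{T-1}\E\,\norm{y_k-y_\star}^2 \;\le\; \gamma^2 + \frac1T\,\E V(\x_0,\f_0).
\]

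Next I would take $\limsup_{T\to\infty}$ of both sides. Since $\E V(\x_0,\f_0)$ is a finite constant depending only on the algorithm initialization and on $P,p$ (exactly as in the proof of \cref{lem:rate}), the term $\tfrac1T\,\E V(\x_0,\f_0)$ vanishes, so
\[
  \limsup_{T\to\infty}\ \frac1T\sum_{k=0}^{T-1}\E\,\norm{y_k-y_\star}^2 \;\le\; \gamma^2 .
\]
Taking square roots and then the supremum over $f\in\FmL$, $\xi_0$, and $\mathbb{P}\in\mathcal{P}_\sigma$ reproduces verbatim the definition of $\sensitivity(G,\sigma^2)$, giving $\sensitivity(G,\sigma^2)\le\gamma$.

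The main obstacle, as in \cref{lem:rate}, is the start-up transient of the lifted system: the state $\x_k$ in \eqref{eq:state_aug} bundles the $\ell$ previous iterates, so conditions \cref{sensi,sensii} are only meaningful once $k\ge\ell$, and the trajectory must be initialized consistently (e.g.\ by running the original recursion \eqref{eq:ssalg} for the first $\ell$ steps). I would absorb this into the constant $\E V(\x_\ell,\f_\ell)$ and start the telescoping sum at $k=\ell$; the finitely many missing terms $\E\,\norm{y_k-y_\star}^2$ for $k<\ell$ are bounded and contribute $O(1/T)$, hence drop out of the $\limsup$. A secondary point to verify is that \cref{sensii} is asked to hold for \emph{every} $\mathbb{P}\in\mathcal{P}_\sigma$ — this is where the eventual SDP certificate producing $P$, $p$, and $\gamma$ must encode $\E w_k^\tp w_k\preceq\sigma^2 I_d$ together with the zero-mean, uncorrelated assumptions — but granting \cref{sensi,sensii} as stated, the averaging argument above requires nothing further.
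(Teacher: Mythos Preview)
Your argument is correct and matches the paper's own proof essentially line for line: telescope \cref{sensii} over $k=0,\dots,T-1$, drop $\E V(\x_T,\f_T)\ge 0$ via \cref{sensi}, divide by $T$, and take the $\limsup$. The additional care you give to the start-up transient for $k<\ell$ is a valid refinement that the paper leaves implicit.
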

\begin{proof}
applying \cref{sensi} and averaging \cref{sensii} over $k=0,\dots,T-1$, we obtain
\[
    \frac{1}{T} \E \sum_{t=0}^{T-1} \norm{y_k-y_\star}^2 \leq \frac{1}{T} \E V(\x_0,\f_0) + \gamma^2.
\]
Taking the limit superior as $T\to\infty$ implies that $\gamma$ is an upper bound on the senstivity to gradient noise.
\end{proof}

We will now show how \cref{lem:rate,lem:sens}, together with the interpolation conditions of \cref{sec:interpolation}, can be used to efficiently certify convergence rate and noise sensitivity for iterative algorithm.


\begin{thm}\label{thm:lmi_cvx}
	Consider an algorithm $G$ of the form \eqref{eq:ssalg} with with fixed point $(\xi_\star,y_\star,u_\star)$ applied to a function $f \in \FmL$. Suppose there is additive gradient noise with distribution in $\mathcal{P}_\sigma$. Define the truncation matrices in \eqref{eq:Z}, the augmented state space and projection matrices in \eqref{eq:sslifted}--\eqref{eq:sslifted2}, and the valid inequality matrices in \eqref{eq:multipliers}.
	\begin{enumerate}[a)]
 	\item If there exist $P=P^\tp\in\R^{(n+2\ell)\times (n+2\ell)}$ and $p\in\R^\ell$ and $\Lambda_1,\Lambda_2\geq 0$ (elementwise) and $r>0$ such that
 	\begin{subequations}\label{eq:lmi_cvx_rate}
	\begin{align}
    \squeezemat{-2pt}\hspace{-5mm}
    \bmat{ \A & \B \\ I & 0 \\ \C & \D}^\tp\!
    \bmat{ P & 0 & 0\\ 0 & -r^2 P & 0 \\ 0 & 0 & \Pi(\Lambda_1)}\!
    \bmat{ \A & \B \\ I & 0 \\ \C & \D } &\preceq 0
    \label{eq:lmi_cvx_1} \\
    (Z_+ - r^2 Z)^\tp p + \pi(\Lambda_1) &\leq 0
    \label{eq:lmi_cvx_2} \\
    \hspace{-1cm}\X^\tp \X + \squeezemat{-2pt}
    \bmat{I & 0 \\ \C & \D}^\tp \!
    \bmat{-P & 0 \\ 0 & \Pi(\Lambda_2)}\!
    \bmat{I & 0 \\ \C & \D} &\preceq 0
    \label{eq:lmi_cvx_3} \\
    -Z^\tp p + \pi(\Lambda_2) &\leq 0
    \label{eq:lmi_cvx_4}
	\end{align}
  \end{subequations}
	then $\rate(G) \leq r$.
	\item If there exist $P=P^\tp\in\R^{(n+2\ell)\times(n+2\ell)}$ and $p\in\R^\ell$ and $\Lambda_1,\Lambda_2\geq 0$ (elementwise) such that
	\begin{subequations}\label{eq:lmi_cvx_sensitivity}
	\begin{align}
        \squeezemat{-2pt}\hspace{-1cm}
        \bmat{ \A & \B \\ I & 0 \\ \C & \D}^\tp \!
        \bmat{ P & 0 & 0\\ 0 & -P & 0\\ 0 & 0 & \Pi(\Lambda_1)} \!
        \bmat{ \A & \B \\ I & 0 \\ \C & \D}\! + \Y^\tp \Y &\preceq 0
    \label{eq:lmi_cvx_sensitivity_1} \\
    (Z_+ - Z)^\tp p + \pi(\Lambda_1) &\leq 0
    \label{eq:lmi_cvx_sensitivity_2} \\
    \squeezemat{-2pt}
    \bmat{I & 0 \\ \C & \D}^\tp \!
    \bmat{-P & 0 \\ 0 & \Pi(\Lambda_2)} \!
    \bmat{I & 0 \\ \C & \D} &\preceq 0
    \label{eq:lmi_cvx_sensitivity_3} \\
    -Z^\tp p + \pi(\Lambda_2) &\leq 0
    \label{eq:lmi_cvx_sensitivity_4}
  \end{align}
  \end{subequations}
	then $\sensitivity(G,\sigma^2) \le \sqrt{\sigma^2 d\cdot (\H^\tp P \H)}$.
	\end{enumerate}
\end{thm}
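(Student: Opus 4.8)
The plan is to verify the hypotheses of Lemma~\ref{lem:rate} (part a) and Lemma~\ref{lem:sens} (part b) by exhibiting a Lyapunov function $V$ of the form~\eqref{eq:lyap} whose properties are encoded by the LMIs. The key observation is that each scalar condition in Lemma~\ref{lem:rate}/\ref{lem:sens} decomposes into a ``quadratic-in-state'' part, handled by a matrix inequality, and a ``linear-in-$f$'' part, handled by a vector inequality; the interpolation inequalities from Corollary~\ref{lem:cvx_func_ineq} are used as S-procedure terms to relax the conditions so they hold for \emph{all} $f\in\FmL$ rather than pointwise.

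For part a), I would first write $V(\x_k,\f_k) = \trace(\x_k^\tp P \x_k) + p^\tp Z\f_k$ and expand $V(\x_{k+1},\f_{k+1})$ using the lifted dynamics~\eqref{eq:sslifted} with $w_k=0$: substituting $\x_{k+1} = \A\x_k + \B u_k$ gives a quadratic form in the stacked vector $(\x_k, u_k)$ with middle matrix $\bmat{\A & \B}^\tp P \bmat{\A & \B}$, while $Z\f_{k+1}$ relates to $Z_+\f_k$ (the shift of the lifted function values) plus the new value $f_{k+1}$; the coefficient bookkeeping here is exactly what produces the $(Z_+ - r^2 Z)^\tp p$ term. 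Then I invoke Corollary~\ref{lem:cvx_func_ineq} with multiplier $\Lambda_1\ge 0$: the inequality~\eqref{eq:cvx_ineq} can be written in terms of $(\y_k,\u_k,\f_k)$, which via the output equation $\bmat{\y_k\\\u_k} = \C\x_k + \D u_k$ and the identification of $\f_k$-entries becomes a quadratic-plus-linear form in $(\x_k,u_k)$ and $\f_k$. Adding this nonnegative quantity to $r^2 V(\x_k,\f_k) - V(\x_{k+1},\f_{k+1})$ and demanding the result be nonnegative for all $(\x_k,u_k,\f_k)$ splits into~\eqref{eq:lmi_cvx_1} (the quadratic part, after reading off the congruence through $\bmat{\A & \B; I & 0; \C & \D}$) and~\eqref{eq:lmi_cvx_2} (the linear-in-$\f$ part). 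This establishes condition~\ref{rateii} of Lemma~\ref{lem:rate}. Conditions~\eqref{eq:lmi_cvx_3}--\eqref{eq:lmi_cvx_4}, with multiplier $\Lambda_2$, similarly encode $V(\x_k,\f_k) \ge \norm{\xi_k - \xi_\star}^2 = \norm{\X\bmat{\x_k\\u_k}}^2$, i.e. condition~\ref{ratei}; here the interpolation terms are needed because $V$ need not be positive definite on its own. Lemma~\ref{lem:rate} then gives $\rate(G)\le r$.

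For part b), the structure is identical but with $r=1$, with the noise input $w_k$ retained, and with an expectation taken. After substituting $\x_{k+1} = \A\x_k + \B u_k + \H w_k$ and taking $\E[\,\cdot\,]$, the cross terms $\E[w_k] = 0$ drop out and the pure-noise term contributes $\E\trace(w_k^\tp \H^\tp P \H w_k) \le \sigma^2 d\,(\H^\tp P\H)$ using $\E w_k^\tp w_k \preceq \sigma^2 I_d$ (with the scalar $\H^\tp P\H$ nonnegative, which the feasibility of~\eqref{eq:lmi_cvx_sensitivity_1} forces). The extra $\Y^\tp\Y$ term in~\eqref{eq:lmi_cvx_sensitivity_1} accounts for the $\E\norm{y_k - y_\star}^2$ term in condition~\ref{sensii} of Lemma~\ref{lem:sens}, since $y_k - y_\star = \Y\bmat{\x_k\\u_k}$. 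Combining, one obtains $\E V(\x_{k+1},\f_{k+1}) - \E V(\x_k,\f_k) + \E\norm{y_k - y_\star}^2 \le \sigma^2 d\,(\H^\tp P\H) =: \gamma^2$, which is condition~\ref{sensii}; conditions~\eqref{eq:lmi_cvx_sensitivity_3}--\eqref{eq:lmi_cvx_sensitivity_4} give $\E V(\x_k,\f_k)\ge 0$, which is condition~\ref{sensi} (slightly weaker than the rate case — nonnegativity rather than domination of $\norm{\xi_k-\xi_\star}^2$). Lemma~\ref{lem:sens} then delivers the claimed sensitivity bound.

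The main obstacle I anticipate is the careful bookkeeping of the lifted function-value vector $\f_k$ and its shift: one must verify that the entries of $\f_{k+1}$ appearing in $Z\f_{k+1}$ and the entries of $\f_k$ appearing in the interpolation inequality~\eqref{eq:cvx_ineq} line up consistently with the truncation matrices $Z_+, Z$ and the unit vectors $\e_i$, including the ``$\star$'' index where $\e_\star = 0$ and $f_\star$ is a free constant — this is where the precise forms $(Z_+ - r^2 Z)^\tp p$ and $-Z^\tp p$ come from, and getting a sign or an index shift wrong would break the argument. A secondary subtlety is justifying that the interpolation inequality may be applied along the \emph{actual} trajectory of the algorithm (the iterates $y_k,\dots,y_{k-\ell}$ together with $y_\star$), which is exactly the setting of Corollary~\ref{lem:cvx_func_ineq}, so no new work is needed there beyond citing it. Everything else is routine: expanding quadratic forms, applying the S-procedure, and reading off congruence transformations.
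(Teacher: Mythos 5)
Your proposal follows essentially the same route as the paper: plug the lifted dynamics into the Lur\'e--Postnikov Lyapunov function \eqref{eq:lyap}, add the interpolation inequalities from \cref{lem:cvx_func_ineq} as S-procedure terms with multipliers $\Lambda_1,\Lambda_2$, and read the resulting sufficient conditions off as the LMIs \eqref{eq:lmi_cvx_rate} and \eqref{eq:lmi_cvx_sensitivity}, then invoke \cref{lem:rate,lem:sens}. The paper phrases this ``forward'' (left-right multiply the LMIs by $(\x_k,u_k)$, take traces, pair the vector inequalities with $\f_k$, and sum), whereas you phrase it ``backward'' (impose the Lyapunov conditions and split into quadratic and linear parts), but these are the same argument.

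One small imprecision worth fixing: you write that the split comes from ``demanding the result be nonnegative for all $(\x_k,u_k,\f_k)$.'' If $\f_k$ were unconstrained, the linear-in-$\f$ coefficients would have to vanish identically, not merely be $\le 0$. The inequalities \eqref{eq:lmi_cvx_2}, \eqref{eq:lmi_cvx_4} (and their sensitivity analogues) are one-sided precisely because they are paired with $\f_k \ge 0$ elementwise, which holds under the implicit normalization $f_\star = 0$ (so $f_{k-i} = f(y_{k-i}) - f(y_\star) \ge 0$); the paper states this explicitly when it takes the inner product of the vector inequalities with $\f_k$. Your closing remark about the ``$\star$'' index and sign bookkeeping is pointing at exactly this, so it is a matter of making the constraint $\f_k \ge 0$ explicit rather than a genuine gap in approach.
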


\begin{proof}
  Consider a trajectory $(\xi_k,u_k,y_k,w_k)$ of the dynamics \eqref{eq:ssalg} with $w_k=0$. Multiply \eqref{eq:lmi_cvx_1} and \eqref{eq:lmi_cvx_3} on the right and left by $(\x_k,u_k)\in\R^{n+\ell+1}$ and its transpose, respectively, and take the trace. Also, take the inner product of \eqref{eq:lmi_cvx_2} and \eqref{eq:lmi_cvx_4} with $\f_k$, which is valid because $\f_k$ is elementwise nonnegative. Next, sum the resulting \eqref{eq:lmi_cvx_1}+\eqref{eq:lmi_cvx_2} and \eqref{eq:lmi_cvx_3}+\eqref{eq:lmi_cvx_4}, and the result immediately follows from \cref{lem:rate} and \cref{eq:cvx_ineq}.

  For the second part of the proof, we do not restrict $w_k=0$, and perform similar operations to the inequalities \eqref{eq:lmi_cvx_sensitivity} as in the first part, this time taking expected values, and applying \cref{lem:sens} and \cref{eq:cvx_ineq}.
\end{proof}

\subsection{Efficient numerical solutions}

For fixed $r>0$, the conditions \eqref{eq:lmi_cvx_rate} and \eqref{eq:lmi_cvx_sensitivity} are linear matrix inequalities (LMIs) in the variables $P,p,\Lambda_1,\Lambda_2$. Each LMI has a size that depends on the number of algorithm states $n$ and the lifting dimension $\ell$, both of which are typically small. Critically, the size of the LMIs \emph{does not} depend on $d$ (the domain dimension of $f$), which can be very large in practice.

To find the best bound on $\rate(G)$, one can perform a bisection search on $r$, at each step checking feasibility of \cref{eq:lmi_cvx_rate}. To find the best bound on $\sensitivity(G,\sigma^2)$, we can directly minimize $\H^\tp P \H$ subject to \cref{eq:lmi_cvx_sensitivity}.

\section{Numerical examples}\label{sec:numerical}

In \cref{fig:rate_comparison}, we plot bounds on $\rate(G)$ for smooth strongly convex functions for different algorithms and choices of $L/m$, computed using \cref{thm:lmi_cvx}. For all curves, a lifting dimension $\ell=1$ was enough to get the best results. We tuned \eqref{GD} and \eqref{HB} as in \cref{prop:quadratic}, which recovers the result of \cite[\S4.6]{lessard16} whereby HB tuned in this manner may not be globally convergent in $\FmL$. We tuned \eqref{FG} as in \cref{prop:quadratic}, which yields a tighter rate bound that that of \cref{prop:estimating_sequences} (shown as FG*). Finally, we show the \emph{Triple Momentum Method} \cite{van2017fastest}, which has the fastest known convergence rate for this function class.

\begin{figure}[ht]
\centering
\includegraphics{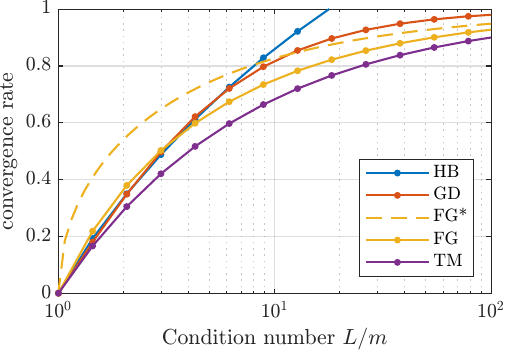}
\caption{Application of \cref{thm:lmi_cvx} to find bounds on $\rate(G)$ for different algorithms applied to smooth strongly convex functions. See the text of \cref{sec:numerical} for details.}
\label{fig:rate_comparison}
\end{figure}

In \cref{fig:perf_tradeoff}, we plot sensitivity to gradient noise versus convergence rate for various algorithms applied to smooth strongly convex functions with $m=1$ and $L=8$, computed using \cref{thm:lmi_cvx}. We used $\ell=1$ for convergence rate and $\ell=6$ for noise sensitivity. Also shown is GD*, which explores other tunings of GD with $0 \leq \alpha \leq \tfrac{2}{L}$ and shows that the regime $\alpha > \frac{2}{L+m}$ is always Pareto-suboptimal. Finally, we show RAM, which is the \emph{Robust Accelerated Method} of \cite{optalg} and achieves the best known trade-off between convergence rate and noise robustness.

\begin{figure}[ht]
\centering
\includegraphics{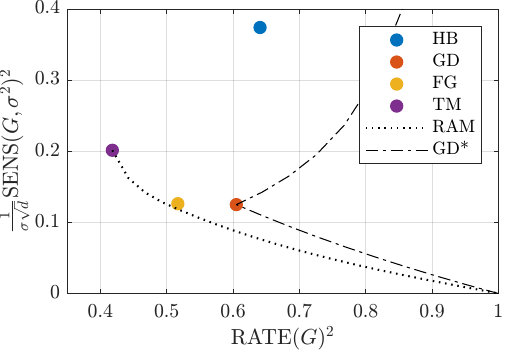}
\caption{Application of \cref{thm:lmi_cvx} to plot the trade-off between sensitivity to additive gradient noise and convergence rate in $\mathcal{F}_{1,8}$. See the text of \cref{sec:numerical} for details.}
\label{fig:perf_tradeoff}
\end{figure}

\section{Concluding remarks}

In this tutorial, we presented a Lyapunov-based approach for algorithm analysis, which is covered in more technical detail in \cite{optalg,dissalg,van2022absolute}.

Showcased in \cref{fig:perf_tradeoff,fig:rate_comparison}, this approach attains numerical results that empirically match those obtained using IQCs \cite{lessard16,scherer}, but does so using a familiar Lur'e--Postnikov Lyapunov function \eqref{eq:lyap}, and with results such as \cref{thm:lmi_cvx} that have straightforward proofs.

\newpage
\bibliographystyle{IEEEtran}
{\footnotesize\bibliography{lyaplift}}

\end{document}